\theoremstyle:=definition,remark,plain\do{%
        \expandafter\g@addto@macro\csname th@\theoremstyle\endcsname{%
            \addtolength\thm@preskip\parskip
            }%
        }
\numberwithin{equation}{section}
\newtheorem{theorem}{Theorem}[section]
\newtheorem{proposition}[theorem]{Proposition}
\newtheorem{lemma}[theorem]{Lemma}
\newtheorem{corollary}[theorem]{Corollary}
\theoremstyle{definition}
\theoremstyle{remark}
\newtheorem{remark}[theorem]{Remark}
\newtheorem{example}[theorem]{Example}
\DeclareMathOperator{\Cov}{Cov}
\DeclareMathOperator{\id}{id}
\DeclareMathOperator{\Id}{Id}
\newcommand{\E}{\mathbb E}
\newcommand{\PP}{\mathbb P}
\newcommand{\QV}[1]{\langle#1\rangle}
\newcommand{\abs}[1]{\lvert#1\rvert}
\newcommand{\Abs}[1]{\left\lvert#1\right\rvert}
\newcommand{\norm}[1]{\lVert#1\rVert}
\newcommand{\lipnorm}[1]{\lVert#1\rVert_\mathrm{Lip}}
\newcommand{\R}{\mathbb{R}}
\newcommand{\todo}[1]{}
\title{Time Averages of Markov Processes and Applications to
  Two-Timescale Problems}
\author{Bob Pepin \thanks{bob.pepin@uni.lu or bobpepin@gmail.com}}
\affil{Mathematics Research Unit, FSTC, University of Luxembourg \\
Maison du Nombre, 4364 Esch-sur-Alzette, Grand-Duchy of Luxembourg}
\begin{document}
\maketitle

\begin{abstract}
  We show a decomposition into the sum of a martingale and a
  deterministic quantity for time averages of the solutions to
  non-autonomous SDEs and for discrete-time Markov processes. In the
  SDE case the martingale has an explicit representation in terms of
  the gradient of the associated semigroup or transition operator. We
  show how the results can be used to obtain quenched Gaussian
  concentration inequalities for time averages and to provide deeper
  insights into Averaging principles for two-timescale processes.
\end{abstract}

\section{Introduction}
For a Markov process $(X_t)_t$ with $t \in [0, T]$ or $t =
0,1,\ldots,T$ let
\begin{equation*}
  S_T f = \int_0^T f(t, X_t) dt
\end{equation*}
in the continuous-time case or
\begin{equation*}
  S_T f = \sum_{t=0}^{T-1} f(t, X_t)
\end{equation*}
in discrete time.

In the first part of this work, we will show a decomposition of the
form
\begin{equation*}
  S_T f = \E S_T f + M^{T,f}_T
\end{equation*}
where $M^{T,f}$ is a martingale depending on $T$ and $f$ for which we
will give an explicit representation in terms of the transition
operator or semigroup associated to $X$. 

We then proceed to illustrate how the previous results can be used to obtain
Gaussian concentration inequalities for $S_T$ when $X$ is the solution
to an Itô SDE.

The last part of the work showcases a number of results on two-timescale
processes that follow from our martingale representation.

\section{Martingale Representation}

Consider the following SDE with time-dependent coefficients on $\R^n$:
\begin{align*}
  dX_t &= b(t, X_t)dt + \sigma(t, X_t) dB_t, X_0 = x
\end{align*}
where $B$ is a standard Brownian motion on $\R^n$ with filtration
$(\mathcal{F}_t)_{t\geq0}$ and $b(t, x), \sigma(t, x)$ are continuous
in $t$ and locally Lipschitz continuous in $x$. We assume that $X_t$
does not explode in finite time.

Denote $C_c^\infty$ the set of smooth compactly
supported space-time functions on $\R_+ \times \R^n$.

Let $P_{s,t}$ be the evolution operator associated to $X$,
\begin{equation*}
  P_{s,t} f(x) = \E\left[f(t, X_t) \middle| X_s = x\right], \quad f
  \in C_c^\infty.
\end{equation*}

For $T > 0$ fixed consider the
martingale
\begin{equation*}
  M_t = \E^{\mathcal{F}_t} \int_0^T f(s, X_s) ds.
\end{equation*}
and observe that since $X$ is adapted and by the Markov property
\begin{equation*}
  M_t = \int_0^t f(s, X_s) ds + \E^{\mathcal{F}_t} \int_t^T f(s, X_s)
  ds = \int_0^t f(s, X_s) ds + R_t^T f(X_t)
\end{equation*}
with
\begin{equation*}
  R_t^T f(x) = \int_t^T P_{t,s} f(x) ds.
\end{equation*}
By applying the Itô formula to $R_t^T f$ we can identify the
martingale $M$. This is the content of the following short theorem.

\begin{theorem}\label{theorem:M}
  For $T > 0$ fixed, $t \in [0, T]$ and $f \in C_c^\infty$
  \begin{equation*}
    \int_0^t f(s, X_s) ds + R_t^T f(X_t) = \E \int_0^T f(s, X_s) ds +
    M_t^{T,f}
  \end{equation*}
  with
  \begin{align*}
    M_t^{T,f} &= \int_0^t \nabla R_s^T f(X_s) \cdot \sigma(s, X_s) dB_s.
  \end{align*}

\end{theorem}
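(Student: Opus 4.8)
The plan is to apply Itô's formula to the space-time function $g(t,x) := R_t^T f(x)$ evaluated along the path $X_t$, and to show that the resulting finite-variation (drift) term exactly cancels the contribution $f(t, X_t)\,dt$ coming from the running integral, leaving only a stochastic integral. Writing $L_t$ for the generator of $X$ at time $t$, namely $L_t g = b(t,\cdot)\cdot\nabla g + \tfrac12 \Tr(\sigma\sigma^\top \Hess g)$, Itô's formula for a $C^{1,2}$ space-time function gives
\begin{equation*}
  d\bigl(R_t^T f(X_t)\bigr) = \bigl(\partial_t R_t^T f + L_t R_t^T f\bigr)(t,X_t)\, dt + \nabla R_t^T f(X_t)\cdot \sigma(t, X_t)\, dB_t.
\end{equation*}

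The key computation is to evaluate the drift. Differentiating $R_t^T f(x) = \int_t^T P_{t,s} f(x)\, ds$ in $t$ via the Leibniz rule for the lower endpoint, together with the Kolmogorov backward equation $\partial_t P_{t,s} f = -L_t P_{t,s} f$ and the boundary identity $P_{t,t} f = f(t, \cdot)$, I expect to obtain
\begin{equation*}
  \partial_t R_t^T f(x) = -f(t, x) - \int_t^T L_t P_{t,s} f(x)\, ds = -f(t, x) - L_t R_t^T f(x),
\end{equation*}
so that $(\partial_t + L_t) R_t^T f = -f$. Substituting this into Itô's formula collapses the drift of $R_t^T f(X_t)$ to exactly $-f(t, X_t)\, dt$. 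Adding the differential $f(t, X_t)\,dt$ of the running integral $\int_0^t f(s, X_s)\, ds$ then annihilates the drift, and integrating from $0$ to $t$ yields
\begin{equation*}
  \int_0^t f(s, X_s) ds + R_t^T f(X_t) - R_0^T f(X_0) = \int_0^t \nabla R_s^T f(X_s)\cdot \sigma(s, X_s)\, dB_s.
\end{equation*}
To close the argument I identify the constant term: by definition $R_0^T f(x) = \int_0^T P_{0,s} f(x)\, ds = \E \int_0^T f(s, X_s)\, ds$, which is precisely the claimed decomposition with $M^{T,f}$ as stated.

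The main obstacle is regularity rather than algebra. Applying Itô's formula requires $R_t^T f \in C^{1,2}$, and justifying both the backward equation for $P_{t,s}$ and the interchange of $\partial_t$ (and of $L_t$) with the $ds$-integral demands enough smoothness of the evolution operator acting on $C_c^\infty$ functions; under the stated hypotheses — coefficients merely continuous in $t$ and locally Lipschitz in $x$ — this is not automatic and may need an auxiliary smoothing or regularity assumption. A secondary technical point is that, because $\sigma$ need not be globally Lipschitz, the stochastic integral should first be localized by the stopping times $\tau_n = \inf\{t : \abs{X_t}\geq n\}$; the compact support of $f$ together with non-explosion of $X$ then lets me pass to the limit $n \to \infty$ and conclude that $M^{T,f}$ is a genuine (and not merely local) martingale.
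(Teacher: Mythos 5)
Your proposal is correct and follows essentially the same route as the paper: apply Itô's formula to $R_t^T f(X_t)$, use the Kolmogorov backward equation together with $P_{t,t}f = f$ to get $(\partial_t + L_t)R_t^T f = -f$, and identify $R_0^T f(X_0) = \E\int_0^T f(s,X_s)\,ds$. Your additional remarks on $C^{1,2}$ regularity and localization address technical points the paper leaves implicit, but the core argument is identical.
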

\begin{proof}
From the Kolmogorov backward equation $\partial_t
P_{t,s} f = -L_t P_{t,s} f$ and since $P_{t,t}f = f$ we have
\begin{align*}
  \partial_t R_t^T f(x) 
  &= -f(t, x) - \int_t^T L_t P_{t,s} f(x) ds = -f(t, x) - L_t R_t^T f(x).
\end{align*}
By Itô's formula
\begin{align*}
  R_t^T f(X_t) 
  &= R_0^T f(X_0) + \int_0^t \partial_s R_s^T f(X_s) ds + \int_0^t L_s R_s^T f(X_s) ds + 
    \int_0^t \nabla R_s^T f(X_s) \cdot \sigma(s, X_s) dB_s \\
  &= \E \int_0^T f(t, X_t) dt - \int_0^t f(s, X_s)ds + \int_0^t \nabla R_s^T f(X_s) \cdot \sigma(s, X_s) dB_s
\end{align*}
and we are done.


\end{proof}

\begin{remark}[Poisson Equation]
  In the time-homogeneous case $P_{t,s} = P_{s-t}$ and when the limit
  below is finite then it is independent of $t$ and we have
  \begin{equation*}
    R^\infty f := \lim_{T\to \infty} R_t^T f = \lim_{T\to \infty} \int_t^T
    P_{s-t} f ds 
    = \lim_{T\to \infty} \int_0^{T-t} P_s f ds = \int_0^\infty P_s f
    ds.
  \end{equation*}
  This is the resolvent formula for the solution to the Poisson
  equation $-Lg = f$ with $g = R^\infty f$.
\end{remark}

By taking $t = T$ in Theorem~\ref{theorem:M} we can identify the
martingale part in the martingale representation theorem for
$\int_0^T f(t, X_t)dt$.

\begin{corollary}\label{corollary:1} For $T > 0$ fixed, $f \in C_c^\infty$
  \begin{equation*}
    \int_0^T f(t, X_t) dt - \E \int_0^T f(t, X_t) dt =
    \int_0^T \nabla
    \int_t^T P_{t,s}f(X_t) ds \cdot \sigma(t, X_t) dB_t.
  \end{equation*}
\end{corollary}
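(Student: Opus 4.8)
The plan is to simply specialize Theorem~\ref{theorem:M} to the endpoint $t=T$ and read off the identity. This is essentially immediate, so the work is in verifying that the boundary term collapses correctly rather than in any new estimate.

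First I would set $t = T$ in the decomposition from Theorem~\ref{theorem:M}, namely
\begin{equation*}
  \int_0^t f(s, X_s)\,ds + R_t^T f(X_t) = \E \int_0^T f(s, X_s)\,ds + M_t^{T,f}.
\end{equation*}
The key observation is that the running remainder vanishes at the terminal time: from its definition $R_t^T f(x) = \int_t^T P_{t,s} f(x)\,ds$, setting $t = T$ gives an integral over the empty interval $[T,T]$, so $R_T^T f(X_T) = 0$. Hence the left-hand side reduces to $\int_0^T f(s, X_s)\,ds$ alone.

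Next I would move the deterministic expectation to the left-hand side, yielding
\begin{equation*}
  \int_0^T f(s, X_s)\,ds - \E \int_0^T f(s, X_s)\,ds = M_T^{T,f}.
\end{equation*}
It then remains only to substitute the explicit martingale representation $M_T^{T,f} = \int_0^T \nabla R_s^T f(X_s) \cdot \sigma(s, X_s)\,dB_s$ and to unfold the definition of $R_s^T f$ inside the integrand, writing $\nabla R_s^T f(X_s) = \nabla \int_s^T P_{s,u} f(X_s)\,du$; relabeling the outer integration variable $s \mapsto t$ (and the inner one appropriately) gives exactly the stated formula.

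I do not anticipate a genuine obstacle here, since Theorem~\ref{theorem:M} already did the analytic work (the Kolmogorov backward equation and Itô's formula). The only point requiring a line of care is confirming that differentiation in space commutes with the integral over $s$ defining $R_s^T f$, so that $\nabla$ may be placed either outside or inside the $ds$-integral without ambiguity; this is justified by the smoothness and compact support assumptions on $f$ that make $P_{t,s} f$ sufficiently regular, which is implicit in the hypotheses already invoked for Theorem~\ref{theorem:M}.
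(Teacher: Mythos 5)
Your proposal is correct and follows exactly the paper's route: the paper obtains the corollary precisely by taking $t = T$ in Theorem~\ref{theorem:M}, where $R_T^T f = 0$ kills the boundary term and the stated identity is the martingale $M_T^{T,f}$ written out. Your extra remark on interchanging $\nabla$ with the $ds$-integral is a reasonable point of care but introduces nothing beyond what the paper implicitly assumes.
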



  By applying the Itô formula to $P_{t,T} f(X_t)$ we obtain for
  $T > 0$ fixed
  \begin{equation}\label{eq:dptgrad}
    dP_{t,T} f(X_t) = \nabla P_{t,T} f(X_t) \cdot \sigma(t, X_t) dB_t
  \end{equation}
  and by integrating from $0$ to $T$
  \begin{equation*}
    f(T, X_T) = \E\left[f(T, X_T)\right] + \int_0^T
    \nabla P_{t,T} f(X_t) \cdot \sigma(t, X_t) dB_t.
  \end{equation*}
  This was observed at least as far back as~\autocite{elliott_integration_1989} and is
  commonly used in the derivation of probabilistic formulas for
  $\nabla P_{s,t}$.

Combining the formula~\eqref{eq:dptgrad} with
Theorem~\ref{theorem:M} we obtain the following expression for
$S_t - \E S_t$ in terms of $\nabla P_{s,t} f$.
\begin{corollary}
  For $f \in C_c^\infty$, $T > 0$ fixed and any $t < T$
  \begin{equation*}
    \int_0^t f(s, X_s) - \E f(s, X_s) ds = M_t^{T,f} - Z_t^{T,f}
  \end{equation*}
  with
  \begin{align*}
    Z_t^{T,f} &= \int_t^T \int_0^t \nabla P_{r,s} f(r, X_r) \cdot
                \sigma(r, X_r)\, dB_r \, ds \\
    M_t^{T,f} &= \int_0^t \int_r^T \nabla P_{r,s} f(r, X_r) \, ds \cdot
                \sigma(r, X_r) \, dB_r.
  \end{align*}
\end{corollary}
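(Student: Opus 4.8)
The plan is to combine the identity of Theorem~\ref{theorem:M} with the It\^o representation~\eqref{eq:dptgrad} applied separately at each fixed terminal time. Rearranging Theorem~\ref{theorem:M} gives
\[
  \int_0^t f(s, X_s)\, ds = \E \int_0^T f(s, X_s)\, ds + M_t^{T,f} - R_t^T f(X_t),
\]
so it suffices to rewrite $M_t^{T,f}$ and the boundary term $R_t^T f(X_t)$ in terms of $\nabla P_{r,s} f$ and to match the deterministic parts. For the martingale, since $R_s^T f(x) = \int_s^T P_{s,u} f(x)\, du$ and $P_{s,u}f$ is smooth in space, one pulls the gradient through the integral to get $\nabla R_s^T f = \int_s^T \nabla P_{s,u} f\, du$. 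Substituting into $M_t^{T,f} = \int_0^t \nabla R_s^T f(X_s)\cdot \sigma(s,X_s)\,dB_s$ and relabelling the integration variables yields precisely the stated form of $M_t^{T,f}$, with the $ds$-integral sitting inside the single stochastic integral; no interchange of integrals is needed here, as the $du$-integral is part of the integrand.

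Next I would control the boundary term. For each fixed terminal time $s \in [t, T]$, apply~\eqref{eq:dptgrad} with $T$ replaced by $s$ and integrate from $0$ to $t$:
\[
  P_{t,s} f(X_t) = P_{0,s} f(X_0) + \int_0^t \nabla P_{r,s} f(X_r) \cdot \sigma(r, X_r)\, dB_r.
\]
Since $X_0 = x$ is deterministic, $P_{0,s} f(X_0) = \E f(s, X_s)$. Integrating this identity in $s$ over $[t,T]$ and using $R_t^T f(X_t) = \int_t^T P_{t,s}f(X_t)\,ds$ gives
\[
  R_t^T f(X_t) = \int_t^T \E f(s, X_s)\, ds + Z_t^{T,f},
\]
with $Z_t^{T,f}$ exactly the quantity in the statement. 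Keeping $Z_t^{T,f}$ in the form ``$ds$-integral of stochastic integrals'' means it appears directly, again without any stochastic Fubini swap.

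Finally, substituting back and applying the ordinary Fubini theorem, $\E\int_0^T f(s,X_s)\,ds = \int_0^T \E f(s,X_s)\,ds$, so subtracting $\int_t^T \E f(s,X_s)\,ds$ leaves $\int_0^t \E f(s,X_s)\,ds$. Rearranging then yields $\int_0^t f(s,X_s) - \E f(s,X_s)\,ds = M_t^{T,f} - Z_t^{T,f}$, as claimed. The only genuinely delicate point is measurability and integrability: to read $R_t^T f(X_t) = \int_t^T P_{t,s}f(X_t)\,ds$ as the $s$-integral of the pointwise-in-$s$ It\^o identities, one needs the family $s \mapsto \int_0^t \nabla P_{r,s} f(X_r)\cdot\sigma(r,X_r)\,dB_r$ to be jointly measurable in $(s,\omega)$ and $ds$-integrable, which is the content of the stochastic Fubini theorem. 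For $f \in C_c^\infty$ the gradients $\nabla P_{r,s}f$ are bounded with the requisite joint regularity under the standing local Lipschitz and non-explosion assumptions, so the hypotheses hold; this is where the care is needed, but the algebra itself is immediate once the two representation results are in hand.
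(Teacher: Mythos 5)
Your proof is correct and takes essentially the same route as the paper: both arguments combine Theorem~\ref{theorem:M} with the representation~\eqref{eq:dptgrad} applied at each fixed terminal time $s$ and then integrated over $s \in [t,T]$, modulo the same measurability caveat that neither the paper nor you fully elaborates. The only difference is bookkeeping: the paper applies Theorem~\ref{theorem:M} to the centered function $f_0(t,x) = f(t,x) - \E f(t,X_t)$ (exploiting $R_0^T f_0 = 0$ and $\nabla P_{t,s} f_0 = \nabla P_{t,s} f$), whereas you apply it to $f$ directly and cancel the deterministic terms at the end.
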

\begin{proof}
  Let $f_0(t, x) = f(t, x) - \E f(t, X_t) = f(t, x) - P_{0,t}(x_0)$. We have
  \begin{align*}
    R_t^T f_0(X_t) 
    &= \int_t^T P_{t,s} f_0(X_t) ds \\
    &= \int_t^T P_{t,s} f(X_t) - P_{0,s}f(X_0) ds \\
    &= \int_t^T \int_0^t \nabla P_{r,s} f(r, X_r) \cdot \sigma(r, X_r)
      dB_r \, ds
  \end{align*}
  where the last equality follows by integrating~\eqref{eq:dptgrad}
  from $0$ to $t$ (with $T = s$). Since $R_0^T f_0 = 0$ and
  $\nabla P_{t,s} f_0 = \nabla P_{t,s} f$ we get from
  Theorem~\ref{theorem:M} that
  \begin{equation*}
    \int_0^t f_0(s, X_s) ds = M^{T,f}_t - R_t^T f_0(X_t)
  \end{equation*}
  and the result follows with $Z^{T,f}_t = R_t^T f_0(X_t)$.
\end{proof}




\begin{remark}[Carré du Champs and Mixing]
For differentiable functions $f, g$ let
\begin{equation*}
  \Gamma_t(f, g)(x) = \tfrac12 \nabla f(t, x) (\sigma\sigma^\top)(t, x)
  \nabla g(t, x).
\end{equation*}
Then we have the following expression for the quadratic variation of $M^{T,f}$:
\begin{align*}
  d\QV{M^{T,f}}_t &=
              \Abs{\int_t^T \sigma(t, X_t)^\top \nabla P_{t,s} f(X_t)\,
              ds}^2 dt \\
            &= \left( 4 \int_{t \leq s \leq r \leq T} \Gamma_t(P_{t,s} f, P_{t,r}
              f)(X_t)\, dr \, ds \right) dt.
\end{align*}

Furthermore, since
\begin{equation*}
  \partial_s P_{r,s} (P_{s,t}f P_{s,t}g) = 2
  P_{r,s}(\Gamma_s(P_{s,t}f, P_{s,t}g))
\end{equation*}
and setting $g(t, x) = \int_t^T P_{t, s} f(x) ds$
we have
\begin{align*}
  E\QV{M^{T,f}}_t 
  &= 2 \int_0^T \int_t^T 2 P_{0,t} \Gamma_t(P_{t,s}f, P_{t,s}g) ds \, dt \\
  &= 2 \int_0^T \int_t^T \partial_t P_{0,t}(P_{t,s}f P_{t,s}g) ds \, dt
  \\
  &= 2 \int_0^T \partial_t \int_t^T P_{0,t}(P_{t,s}f P_{t,s}g) ds\, dt +
    2 \int_0^T P_{0,t}(f g) dt \\
  &= 2 \int_0^T P_{0,t}(fg) - P_{0,t}f P_{0,t}g \, dt \\
  &= 2 \int_{0 \leq t \leq s \leq T} \Cov(f(t, X_t), f(s, X_s)) ds \, dt.
\end{align*}
This shows how the expressions we obtain in terms of the gradient of
the semigroup relate to mixing properties of $X$.
\end{remark}

\begin{remark}[Pathwise estimates]


We would like to have a similar estimate for
\begin{equation*}
  \E \sup_{0\leq t\leq T} \Abs{\int_0^t f(X_s) - \E f(X_s) ds}.
\end{equation*}

Setting
\begin{equation*}
f_0(t, x) = f(x) - \E f(X_t) = f(x) - P_{0,t} f(x_0)
\end{equation*}
we have
\begin{align*}
  \E \sup_{0\leq t\leq T} \Abs{\int_0^t f(X_s) - \E f(X_s) ds} 
  & \leq \E \sup_{0\leq t\leq T} \abs{M_t^{T,f_0}} + \E \sup_{0\leq
    t\leq T} \abs{R_t^T f_0(X_t)} \\
  & \leq 2\left(\E\QV{M^{T, f_0}}_T\right)^{1/2} +  \E \sup_{0\leq
    t\leq T} \abs{R_t^T f_0(X_t)}
\end{align*}
and
\begin{align*}
  R_t^T f_0 (X_t)
  &= \int_t^T P_{t,s} f(X_t) - P_{0,s} f(x_0) ds \\
  &= \int_t^T \int_0^t \nabla P_{r,s} f(X_r) \cdot \sigma(r, X_r) dB_r ds
\end{align*}
where the last equality follows from (for $s$ fixed)
\begin{equation*}
  dP_{t,s}f(X_t) = \nabla P_{t,s} f(X_t) \cdot \sigma(t, X_t) dB_t.
\end{equation*}
\end{remark}

\subsection{Discrete time}
Consider a discrete-time Markov process $(X_n)_{n=1\ldots N}$ with
transition operator
\begin{equation*}
  P_{m,n} f(x) = \E[f_n(X_n) | X_m = x]
\end{equation*}
and generator
\begin{equation*}
  L_n f(x) = P_{n,n+1} f(x) - f_{n}(x).
\end{equation*}
As in the continuous-time setting
\begin{equation*}
  M_n := f_n(X_n) - f_0(X_0) - \sum_{m=0}^{n-1} L_m f(X_m)
\end{equation*}
is a martingale (by the definition of $L$) and by direct calculation
\begin{equation*}
  M_n - M_{n-1} = f_n(X_n) - P_{n-1,n} f(X_{n-1}).
\end{equation*}
Let
\begin{equation*}
  R_n^N f(x) = \sum_{m=n}^{N-1} P_{n,m} f(x)
\end{equation*}
and observe that
\begin{equation*}
  L_n R^N f(x) = \sum_{m=n+1}^{N} P_{n,n+1}P_{n+1,m} f(x) -
  \sum_{m=n}^{N-1} P_{n,m} f(x) = -f_{n}(x).
\end{equation*}
Note that
\begin{equation*}
  R_N^N f(x) = 0 \text{ and } R_0^N f(x) = \E\left[\sum_{m=n}^{N-1} f(X_m)
  \middle| X_0 = x\right].
\end{equation*}
It follows that
\begin{equation*}
  \sum_{m=0}^{n-1} f_m(X_m) + R_n^N f(X_n) = -\sum_{m=0}^{n-1} L_m R^N
  f(X_m) + R_n^N f(X_n) = R_0^N f(X_0) + M_n^{N,f}
\end{equation*}
with
\begin{equation*}
  M^{N,f}_n - M^{N,f}_{n-1} = \sum_{m=n}^{N-1} P_{n,m} f(X_n) -
P_{n-1,m} f(X_{n-1}).
\end{equation*}
Analogous to the continuous-time case, we define the carré du champs
\begin{align*}
  \Gamma_n(f, g) &:= L_n (fg) - g_n L_n f - f_n L_n g \\
                 &= P_{n,n+1}(fg) - f_n P_{n,n+1} g - g_n P_{n,n+1} f
                   + f_n g_n
  \\
                 &= \E\left[(f_{n+1}(X_{n+1}) - f_n(X_n))(g_{n+1}(X_{n+1}) - g_n(X_n)) | {\mathcal{F}_n}\right]
\end{align*}
and using the summation by parts formula
\begin{align*}
\lefteqn{  \QV{M^{N,f}}_n - \QV{M^{N,f}}_{n-1} 
  = \E[(M^{N,f}_n - M^{N,f}_{n-1})^2 | \mathcal{F}_{n-1}]}\quad \\
  & = 2\sum_{n\leq k \leq m < N-1} \E\left[(P_{n,m}f(X_{n}) -
    P_{n-1,m}f(X_{n-1})) (P_{n,k}f(X_{n}) - P_{n-1,k}f(X_{n-1})) |
    {\mathcal{F}_{n-1}}\right] \\ & \quad \quad + \sum_{m=n}^{N-1} \E\left[(P_{n,m}f(X_{n}) -
    P_{n-1,m}f(X_{n-1}))^2 |
    {\mathcal{F}_{n-1}}\right] \\
  &= 2\sum_{m=n}^{N-1} \sum_{k=m}^{N-1} \Gamma_{n-1}(P_{n-1,m} f, P_{n-1,k}
    f)(X_{n-1}) + \sum_{m=n}^{N-1} \Gamma_{n-1}(P_{n-1,m}f)(X_{n-1}).
\end{align*}

\section{Concentration inequalities from exponential gradient bounds}

In this section we focus on the case where we have uniform exponential decay of 
$\nabla P_{s,t}$ so that
\begin{equation}\label{eq:gradexp}
\abs{\sigma(s, x)^\top \nabla P_{s,t} f(x)} \leq C_s e^{-\lambda_s
  (t-s)} \quad (0 \leq s \leq t \leq T)
\end{equation}
for all $x \in \R^n$ and some class of functions $f$.

We first show that exponential gradient decay implies a
concentration inequality.

\begin{proposition}\label{prop:gaussian} For $T > 0$ fixed and all
  functions $f$ such that~\eqref{eq:gradexp} holds we have
  \begin{equation*}
    \PP\left(\frac{1}{T}\int_0^T f(t, X_t) - \E f(t, X_t) dt > R\right) \leq
        e^{-\frac{R^2 T}{V_T}}, \quad V_T = \frac1T \int_0^T
        \left({\frac{C_t}{\lambda_t} \left(1 - e^{-\lambda_t (T-t)}\right)}\right)^2 dt
  \end{equation*}
\end{proposition}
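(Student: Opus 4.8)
The plan is to reduce everything to a statement about a single continuous martingale with an almost surely bounded quadratic variation, and then apply the standard exponential-supermartingale (Doléans-Dade / Chernoff) argument. First I would invoke Corollary~\ref{corollary:1}, which already expresses the centered time integral as a stochastic integral
\begin{equation*}
  \int_0^T f(t, X_t) - \E f(t, X_t)\, dt = M^{T,f}_T, \qquad
  M^{T,f}_t = \int_0^t \int_r^T \sigma(r, X_r)^\top \nabla P_{r,s} f(X_r)\, ds \cdot dB_r .
\end{equation*}
Thus $M^{T,f}$ is a continuous local martingale whose quadratic variation, as recorded in the Carré du Champs remark, is
\begin{equation*}
  \QV{M^{T,f}}_T = \int_0^T \Abs{\int_t^T \sigma(t, X_t)^\top \nabla P_{t,s} f(X_t)\, ds}^2 dt .
\end{equation*}

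Next I would insert the exponential gradient bound~\eqref{eq:gradexp} pointwise inside the inner $ds$-integral. Since $\abs{\sigma(t, X_t)^\top \nabla P_{t,s} f(X_t)} \leq C_t e^{-\lambda_t(s-t)}$ for every $x$ (hence along every path), the triangle inequality gives
\begin{equation*}
  \Abs{\int_t^T \sigma(t, X_t)^\top \nabla P_{t,s} f(X_t)\, ds}
  \leq \int_t^T C_t e^{-\lambda_t(s-t)}\, ds
  = \frac{C_t}{\lambda_t}\left(1 - e^{-\lambda_t(T-t)}\right).
\end{equation*}
Squaring and integrating in $t$ yields the crucial \emph{deterministic, almost sure} bound $\QV{M^{T,f}}_T \leq T\, V_T$, valid for every realization of $X$; this pathwise nature of the estimate is exactly what makes the resulting inequality quenched.

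Finally I would exploit that $\mathcal{E}_t = \exp\bigl(\theta M^{T,f}_t - \tfrac{\theta^2}{2}\QV{M^{T,f}}_t\bigr)$ is a nonnegative local martingale, hence a supermartingale by Fatou, so $\E \mathcal{E}_T \leq 1$. Combining this with the a.s.\ bound $\QV{M^{T,f}}_T \leq T V_T$ gives the sub-Gaussian moment generating function estimate $\E \exp(\theta M^{T,f}_T) \leq \exp(\tfrac{\theta^2}{2} T V_T)$ for $\theta > 0$. Applying Markov's inequality to $\exp(\theta M^{T,f}_T)$ with the threshold $M^{T,f}_T > RT$ and optimizing over $\theta$ (Chernoff's bound) then produces the stated Gaussian tail for $\tfrac1T M^{T,f}_T = \tfrac1T\int_0^T f(t,X_t) - \E f(t,X_t)\,dt$.

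The routine steps are the integral evaluation and the Chernoff optimization; the only point requiring care is the supermartingale step. I expect the main obstacle to be justifying $\E\mathcal{E}_T \leq 1$ without an a priori Novikov-type integrability hypothesis, but here the almost sure bound $\QV{M^{T,f}}_T \leq T V_T$ makes Novikov's condition trivially satisfiable, so the supermartingale property follows cleanly; one must also confirm that the integrand lies in the class for which the stochastic integral in Corollary~\ref{corollary:1} is well defined, which is guaranteed by the gradient bound~\eqref{eq:gradexp} together with $f \in C_c^\infty$.
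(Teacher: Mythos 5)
Your proposal is correct and follows essentially the same route as the paper's own proof: the martingale representation from Corollary~\ref{corollary:1}, the pathwise deterministic bound $\QV{M^{T,f}}_T \leq T V_T$ obtained by inserting~\eqref{eq:gradexp} into the quadratic variation, the exponential (super)martingale bound on the moment generating function (the paper invokes Novikov, you invoke the supermartingale property, both valid here), and a Chernoff argument. One caveat, shared with the paper itself: the optimization over $\theta$ actually yields $\exp\left(-\frac{R^2 T}{2 V_T}\right)$ rather than the stated $\exp\left(-\frac{R^2 T}{V_T}\right)$, so your closing claim that the Chernoff step ``produces the stated Gaussian tail'' glosses over a factor of $2$ in the exponent.
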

\begin{proof} By~\eqref{eq:gradexp}
\begin{align*}
  d\QV{M^{T, f}}_t 
  &= \Abs{ \int_t^T \sigma(t, X_t)^\top \nabla P_{t,s} f(X_t) ds}^2 dt \\ 
  &\leq {\left(\int_t^T C_t e^{-\lambda_t (s-t)} ds\right)}^2 dt
  = {\left(\frac{C_t}{\lambda_t} \left(1 - e^{-\lambda_t (T-t)}\right)\right)}^2 dt
\end{align*}
so that $\QV{M^{T, f}}_T \leq V_T T$.

By Corollary~\ref{corollary:1} and since Novikov's condition holds
trivially due to $\QV{M^{T,f}}$ being bounded by a deterministic
function we get
  \begin{multline*}
    \E \exp\left(a \int_0^T f(t, X_t) - \E f(t, X_t)dt\right) = \E \exp\left(a M^{T,f}_T\right)
    \\ \leq \E \left[\exp\left(a M^{T,f}_T - \frac{a^2}{2}\QV{M^{T,f}}_T\right)\right]
    \exp\left(\frac{a^2}{2}\QV{M^{T,f}}_T\right) \leq
    \exp\left(\frac{a^2}{2} V_T T\right).
  \end{multline*}
  By Chebyshev's inequality
  \begin{equation*}
    \PP\left(\frac{1}{T}\int_0^T f(t, X_t) - \E f(t, X_t) dt > R\right) \leq
    \exp\left(-a R T\right) \exp\left(\frac{a^2}{2} V_T T\right)
  \end{equation*}
  and the result follows by optimising over $a$.
\end{proof}

The corresponding lower bound is obtained by replacing $f$ by $-f$.

For the rest of this section, suppose that $\sigma = \Id$ and that we are in the
time-homogeneous case so that $P_{s,t} = P_{t-s}$.
An important case where bounds of the form~\eqref{eq:gradexp} hold is
when there is exponential
contractivity in the $L^1$ Kantorovich (Wasserstein) distance $W_1$.
If for any two probability measures $\mu, \nu$ on $\R^n$
\begin{equation}\label{eq:W1exp}
  W_1(\mu P_t, \nu P_t) \leq C e^{-\lambda t} W_1(\mu, \nu).
\end{equation}
then~\eqref{eq:gradexp} holds for all
Lipschitz functions $f$ with $C_s = C$, $\lambda_s = \lambda$.

Here the distance $W_1$ between two probability measures
$\mu$ and $\nu$ on $\R^n$ is defined by
\begin{equation*}
  W_1(\mu, \nu) = \inf_\pi \int \abs{x-y} \pi(dx\, dy)
\end{equation*}
where the infimum runs over all couplings $\pi$ of $\mu$. We also have
the Kantorovich-Rubinstein duality
\begin{equation}\label{eq:kantorovich}
  W_1(\mu, \nu) = \sup_{\lipnorm{f}\leq1} \int f d\mu - \int f d\nu
\end{equation}
and we use the notation
\begin{equation*}
  \lipnorm{f} = \sup_{x \ne y} \frac{f(x) - f(y)}{\abs{x - y}}.
\end{equation*}

We can see that~\eqref{eq:W1exp} implies~\eqref{eq:gradexp} from
\begin{multline*}
  \abs{\nabla P_t f}(x) 
  = \lim_{y \to x} \frac{\abs{P_t f(y) - P_t f(x)}}{\abs{y-x}} 
  \leq \lim_{y \to x} \frac{W_1(\delta_{y} P_t, \delta_x
    P_t)}{\abs{y-x}} \\
  \leq \lipnorm{f} C e^{-\lambda
      t} \lim_{y \to x} \frac{W_1(\delta_{y}, \delta_x)}{\abs{y-x}}
    = \lipnorm{f} C e^{-\lambda
      t}
\end{multline*}
where the first inequality is due to the Kantorovich-Rubinstein
duality~\eqref{eq:kantorovich} and the second is~\eqref{eq:gradexp}. 

Bounds of the form~\eqref{eq:W1exp} have been obtained using coupling methods in
\autocite{eberle_reflection_2016,eberle_quantitative_2016,wang_exponential_2016,}
under the condition that there exist
positive constants $\kappa, R_0$ such that
\begin{equation*}
  (x-y) \cdot (b(x) - b(y)) \leq -\kappa \abs{x-y}^2 \text{ when }
  \abs{x - y} > R_0.
\end{equation*}
Similar techniques lead to the
corresponding results for kinetic Langevin diffusions\autocite{eberle_couplings_2017,}.

Using a different approach, in \cite{crisan_pointwise_2016} the
authors directly show uniform exponential contractivity of the
semigroup gradient for bounded continuous functions, focusing on
situations beyond hypoellipticity.

Besides gradient bounds, exponential contractivity in $W_1$ also
implies the existence of a stationary measure $\mu_\infty$
\autocite{eberle_reflection_2016}.
Proposition~\ref{prop:gaussian} now leads to a simple
proof of a deviation inequality that was obtained in a
similar setting in \autocite{joulin_new_2009,} via a tensorization argument.

\begin{proposition} If~\eqref{eq:W1exp} holds
  then for all Lipschitz
  functions $f$ and all initial measures $\mu_0$
  \begin{equation*}
    \PP_{\mu_0} \left(\frac{1}{T}\int_0^T f(X_t) dt - \int f d\mu_\infty >
    R\right) \leq \exp\left(-\left(\frac{\lambda
      \sqrt{T}\, R}{C \lipnorm{f} (1-e^{-\lambda T})} - \frac{W_1(\mu_0, \mu_\infty)}{\sqrt{T}}\right)^2\right)
  \end{equation*}
\end{proposition}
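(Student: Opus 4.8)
The plan is to write the deviation as a mean-zero time average plus a deterministic bias and to treat the two pieces separately: the fluctuation is exactly the quantity controlled by Proposition~\ref{prop:gaussian}, while the bias encodes the cost of starting from $\mu_0$ rather than from the stationary measure $\mu_\infty$. Since $\sigma = \Id$ and we are in the time-homogeneous case, the contraction hypothesis \eqref{eq:W1exp} yields \eqref{eq:gradexp} for every Lipschitz $f$ with $C_s = C\lipnorm{f}$ and $\lambda_s = \lambda$, as recorded just above via the duality \eqref{eq:kantorovich}, so Proposition~\ref{prop:gaussian} is available.

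First I would split
\begin{equation*}
  \frac{1}{T}\int_0^T f(X_t)\,dt - \int f\,d\mu_\infty
  = \frac{1}{T}\int_0^T \bigl(f(X_t) - \E_{\mu_0} f(X_t)\bigr)\,dt
  + \frac{1}{T}\int_0^T \bigl(\E_{\mu_0} f(X_t) - \int f\,d\mu_\infty\bigr)\,dt.
\end{equation*}
The first integral is the centered time average to which Proposition~\ref{prop:gaussian} applies. For the second (deterministic) integral I would use the stationarity identity $\int f\,d\mu_\infty = \int f\,d(\mu_\infty P_t)$ together with $\E_{\mu_0} f(X_t) = \int f\,d(\mu_0 P_t)$, and then bound the difference by Kantorovich--Rubinstein duality \eqref{eq:kantorovich} and the contraction \eqref{eq:W1exp}:
\begin{equation*}
  \Abs{\E_{\mu_0} f(X_t) - \int f\,d\mu_\infty}
  \leq \lipnorm{f}\, W_1(\mu_0 P_t, \mu_\infty P_t)
  \leq \lipnorm{f}\, C e^{-\lambda t}\, W_1(\mu_0, \mu_\infty).
\end{equation*}
Integrating over $[0,T]$ then bounds the bias by $B := \frac{C\lipnorm{f}(1-e^{-\lambda T})}{\lambda T}\, W_1(\mu_0,\mu_\infty)$.

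Given this, for $R > B$ I would bound the deviation probability by the probability that the fluctuation exceeds the shifted level $R - B$, and apply Proposition~\ref{prop:gaussian} there, after replacing $V_T$ by its uniform upper bound $\Sigma^2 := \bigl(\tfrac{C\lipnorm{f}}{\lambda}(1-e^{-\lambda T})\bigr)^2$ (the integrand defining $V_T$ is largest at $t=0$, so $V_T \leq \Sigma^2$). This produces the bound $\exp\bigl(-(R-B)^2 T/\Sigma^2\bigr)$, and factoring $\sqrt{T}/\Sigma$ out of $R - B = R - \Sigma W_1(\mu_0,\mu_\infty)/T$ rewrites the exponent as the square appearing in the statement. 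The \emph{main obstacle} is the bias term: the crucial step is invoking the stationarity of $\mu_\infty$ to present both $\E_{\mu_0} f(X_t)$ and $\int f\,d\mu_\infty$ as integrals against $P_t$-evolved measures, so that the single hypothesis \eqref{eq:W1exp} controls it; everything else is bookkeeping. I would also flag the caveat that the inequality is only meaningful when the bracketed quantity is positive, that is when $R > B$, since otherwise the shifted threshold $R-B$ is negative and the optimization underlying Proposition~\ref{prop:gaussian} no longer yields the stated exponent.
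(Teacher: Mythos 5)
Your proof is correct and follows essentially the same route as the paper's: the same decomposition into a centered fluctuation plus a deterministic bias, the same Kantorovich--Rubinstein/contraction bound on the bias via stationarity of $\mu_\infty$, and the same application of Proposition~\ref{prop:gaussian} with $V_T$ replaced by its uniform upper bound $\bigl(\tfrac{C\lipnorm{f}}{\lambda}(1-e^{-\lambda T})\bigr)^2$. Your explicit caveat that the argument only yields the stated exponent when $R$ exceeds the bias term is a point the paper's proof glosses over, but it does not alter the approach.
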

\begin{proof}
  We start by applying Proposition~\ref{prop:gaussian} so that
  \begin{align*}
    \lefteqn{\PP_{\mu_0} \left(\frac{1}{T}\int_0^T f(X_t) dt - \int f d\mu_\infty >
    R\right) }\quad \\
  &
    = \PP_{\mu_0} \left(\frac{1}{T}\int_0^T f(X_t) - \E f(X_t) dt > R +
    \frac{1}{T}\int_0^T \mu_\infty(f) - \mu_0 P_t (f) dt\right) \\
  & \leq \exp\left(-\left(R - \Abs{\frac{1}{T}\int_0^T \mu_\infty(f) - \mu_0 P_t (f)
    dt}\right)^2 \frac{T}{V_T}\right), \quad V_T = \left(\frac{\lipnorm{f}
    C (1-e^{-\lambda T})}{\lambda}\right)^2.
\end{align*}
By the Kantorovich-Rubinstein duality
\begin{align*}
  \Abs{\frac{1}{T}\int_0^T \mu_\infty(f) - \mu_0 P_t (f) dt} 
  &\leq \Abs{\frac{1}{T} \int_0^T \norm{\nabla f}_\infty W_1(\mu_\infty P_t,
    \mu_0 P_t) dt} \\
  &\leq \frac{\norm{\nabla f}_\infty C}{\lambda} \frac{(1 -
    e^{-\lambda T})}{T} W_1(\mu, \mu_0) = \frac{\sqrt{V_T}}{T} W_1(\mu, \mu_0).
\end{align*}
from which the result follows immediately.
\end{proof}

\section{Averaging: Two-timescale Ornstein-Uhlenbeck}
Consider the following linear multiscale SDE on $\R \times \R$
where the first component is accelerated by a factor $\alpha \geq 0$:
\begin{align*}
  dX_t &= -\alpha (X_t - Y_t) dt + \sqrt{\alpha} dB^X_t, \quad X_0 = x_0 \\
  dY_t &= -(Y_t - X_t) dt + dB^Y_t,\quad Y_0 = y_0
\end{align*}
with $B^X, B^Y$ independent Brownian motions on $\R$. Denote $P_t$ and
$L$ the associated semigroup and infinitesimal generator respectively.

Let $f(x, y) = x - y$ and note that $Lf = -(\alpha + 1) f$. We have by
the regularity of $P_t$ and the Kolmogorov forward equation
\begin{equation*}
  \partial_t \partial_x P_t f = \partial_x P_t L f = -(\alpha + 1)
  \partial_x P_t f
\end{equation*}
so that
\begin{equation*}
  \partial_x P_t f = \partial_x f e^{-(\alpha+1)t} = e^{-(\alpha+1)t}.
\end{equation*}
Repeating the same reasoning for $\partial_y P_t$ and $P_t$ gives
\begin{equation*}
  \partial_y P_t f = -e^{-(\alpha+1)t} \text{ and }
  P_t f(x, y) = (x-y) e^{-(\alpha+1) t}.
\end{equation*}
From Corollary~\ref{corollary:1}
\begin{equation*}
  \int_0^T X_t - Y_t \, dt = R_0^T f(x_0, y_0) + M_T^{T,f}
\end{equation*}
with
\begin{align*}
  R_t^T f(x, y) 
  &= \int_t^T P_{s-t} f(x, y) ds = (x-y)
                  \frac{1-e^{-(\alpha+1)(T-t)}}{\alpha+1}, \\
  M_T^{T,f} 
  &= \int_0^T \int_t^T \partial_x P_{s-t} f(X_t, Y_t) ds \, \sqrt{\alpha} dB^X_t
  + \int_0^T
  \int_t^T \partial_y P_{s-t} f(X_t, Y_t) ds\, dB^Y_t \\
  &= \int_0^T \frac{1 -
    e^{-(\alpha+1)(T-t)}}{\alpha+1} (\sqrt{\alpha} dB^X_t - dB^Y_t).
\end{align*}
This shows that for each $T$ fixed
\begin{equation*}
  Y_T - (B^Y_T + y_0) = \int_0^T X_t - Y_t dt 
\end{equation*}
is a Gaussian random variable with mean
\begin{equation*}
  R_0^T = (x_0-y_0)
                  \frac{1-e^{-(\alpha+1)T}}{\alpha+1}
\end{equation*}
and variance
\begin{equation*}
  \QV{M^{T,f}}_T = \frac{1}{(\alpha+1)}\int_0^T
  \left(1-e^{-(\alpha+1)(T-t)}\right)^2 dt.
\end{equation*}


\section{Averaging: Exact gradients in the linear case}
Consider
\begin{align*}
  dX_t &= -\alpha (X_t - Y_t) dt + \sqrt{\alpha} dB^X_t, \quad X_0 = x_0 \\
  dY_t &= -(Y_t - X_t) dt - \beta Y_t + dB^Y_t,\quad Y_0 = y_0
\end{align*}
Denote $Z_t((x, y)) = (X_t(x), Y_t(x))$ the solution for $X_0 =
x, Y_0 = y$ and
let $V_t(z, v) = Z_t(z+v) - Z_t(z)$. Then
\begin{equation*}
  dV_t = -A V_t \, dt \text{ with } A = \left(\begin{matrix} \alpha & -\alpha
    \\ -1 & (1+\beta) \end{matrix}\right).
\end{equation*} 

The solution to the linear ODE for $V_t$ is
\begin{equation*}
  V_t(z, v) = e^{-At} v
\end{equation*}
Since $V_t$ does not depend on $z$ we drop it from the notation.  Now
for any continuously differentiable function $f$ on $\R^2$ and
$v \in \R^2, z \in \R^2$ we obtain the following expression for the
gradient of $P_t f(z)$ in the direction $v$:
\begin{align*}
  \nabla_v P_t f(z) 
  &= \lim_{\varepsilon\to 0} \frac{P_t f(z+\varepsilon
  v) - P_t f(z)}{\varepsilon} = \lim_{\varepsilon\to 0} \frac{\E f(Z_t(z+\varepsilon
  v)) - f(Z_t(z))}{\varepsilon} \\
  &=  \lim_{\varepsilon\to 0} \frac{\E \nabla f(Z_t(z)) \cdot
    V_t(\varepsilon v) + o(\abs{V_t(\varepsilon v)})}{\varepsilon} \\
  &= \E \nabla f (Z_t(z)) \cdot e^{-At} v.
\end{align*}
Since
$\nabla_v P_t f = \nabla P_t f \cdot v$
we can identify $\nabla P_t f(z) = \E^z (e^{-At})^\top \nabla f(Z_t)$.

The eigenvalues of $A$ are $(\lambda_0, \alpha \lambda_1)$ with
\begin{align*}
  \lambda_0 &= \frac12 \left(\alpha + \beta + 1 - \sqrt{(\alpha +
              \beta + 1)^2 - 4 \alpha \beta} \right), \\
  \lambda_1 &= \frac1{2\alpha} \left(\alpha + \beta + 1 + \sqrt{(\alpha +
              \beta + 1)^2 - 4 \alpha \beta} \right).
\end{align*}

By observing that
\begin{equation*}
  (\alpha + \beta + 1)^2 - 4\alpha\beta = (\alpha - (1+\beta))^2 +
  4\alpha = (\beta - (\alpha + 1))^2 + 4\beta
\end{equation*}
we see that asymptotically as $\alpha \to \infty$
\begin{align*}
  \lambda_0 & = \beta +  O\left(\frac{1}{\alpha}\right) \\
  \lambda_1 &= 1 + \frac{1}{\alpha} + O\left(\frac{1}{\alpha^2}\right).
\end{align*}

We can compute the following explicit expression for $e^{-At}$
\begin{align*}
  e^{-At} &= c_0(t) \Id - \frac{c_1(t)}{\alpha} A \\
          &= \left(\begin{matrix}
        \frac{c_2(t)}{\alpha} & c_1(t) \\
        \frac{c_1(t)}{\alpha} & c_0(t) - \frac{1+\beta}{\alpha}c_1(t)
    \end{matrix}\right)
\end{align*}
with
\begin{align*}
  c_0(t) &= \frac{{\alpha\lambda_1} e^{-\lambda_0 t} - \lambda_0 e^{-{\alpha\lambda_1}
  t}}{{\alpha\lambda_1} - \lambda_0} = \frac{(1 + \alpha)e^{-\lambda_0 t} -
           \beta e^{-{\alpha\lambda_1}
           t}}{{\alpha\lambda_1} - \lambda_0} +  O\left(\frac{1}{\alpha^2}\right), \\ 
  c_1(t) &= \frac{\alpha}{{\alpha\lambda_1} -
           \lambda_0}\left(e^{-\lambda_0 t} -
  e^{-{\alpha\lambda_1} t}\right), \\
  c_2(t) & = \alpha(c_0(t) - c_1(t)) = \frac{\alpha}{{\alpha\lambda_1} -
           \lambda_0} \left(e^{-\lambda_0 t} - (\beta -
           \alpha)e^{-{\alpha\lambda_1} t}\right) +  O\left(\frac{1}{\alpha}\right).
\end{align*}
Note that $\lambda_0, \lambda_1, c_0, c_1$ and $c_2$ are all of order
$O(1)$ as $\alpha \to \infty$.

We obtain
\begin{align*}
  \sigma^\top \nabla P_t f(z) 
  &= \E\left[ \left(\begin{matrix}
        \frac{c_2(t)}{\sqrt{\alpha}} & \frac{c_1(t)}{\sqrt{\alpha}} \\
        c_1(t) & c_0(t) - \frac{1+\beta}{\alpha}c_1(t)
      \end{matrix}\right)\nabla f(Z_t) \right] \\
  &= \frac{\alpha}{1+\alpha} \left(G_0 e^{-\lambda_0 t} +
    G_1\alpha e^{-{\alpha\lambda_1} t}\right) P_t{\nabla
    f(z)}
\end{align*}
with
\begin{align*}
  G_0 &= \left(\begin{matrix}
        \frac{1}{\sqrt{\alpha}} & \frac1{\sqrt{\alpha}} \\ 
        1 & 1
      \end{matrix}\right) + O\left(\frac{1}{\alpha}\right) \\
  G_1 &= \left(\begin{matrix}
        \frac{1}{\sqrt{\alpha}} -
        \frac{\lambda_0}{\alpha\sqrt{\alpha}} & -\frac1{\alpha \sqrt{\alpha}} \\ 
        -\frac{1}{\alpha} & -\frac{1 + \lambda_0 + \beta}{\alpha^2}
      \end{matrix}\right) = \left(\begin{matrix}
        \frac{1}{\sqrt{\alpha}} & 0 \\ 0 & 0
    \end{matrix}\right) + O\left(\frac{1}{\alpha}\right)
\end{align*}
The expression for $G_0$ shows that
$\abs{\sigma^\top \nabla P_t f(z)}$ can be of order $1/\sqrt{\alpha}$ only
for functions $f_\alpha(z)$ such that
$\E^z[ \partial_x f_\alpha(Z_t) + \partial_y f_\alpha(Z_t) ] =
O(1/\sqrt{\alpha})$.

Furthermore, for any function $f \in C_c^\infty$ we have
\begin{equation*}
  \Cov\left(f(Z_t), B^X_t\right) = O\left(\frac{1}{\sqrt{\alpha}}\right)
\end{equation*}
and
\begin{equation*}
  \Cov\left(\int_0^t f(s, Z_s) ds, B^X_t\right) = O\left(\frac{1}{\sqrt{\alpha}}\right).
\end{equation*}
Indeed, since $dP_{s,t} f(Z_s) = f(Z_s)\cdot \sigma dB_s$ we have
\begin{align*}
  f(Z_t) - \E f(Z_t) 
  &= \int_0^t \nabla P_{s,t} f(Z_s) \cdot \sigma dB_s \\
  &= \int_0^t \nabla_x P_{s,t} f(Z_s) \sqrt{\alpha} dB^X_s 
    + \int_0^t \nabla_y P_{s,t} f(Z_s) dB^Y_s 
\end{align*}
we have
\begin{align*}
  \Cov\left(f(Z_t), B^X_t\right) 
  &= \E\left[(f(Z_t) - \E f(Z_t)) B^X_t\right] \\
  &= \E\left[\int_0^t \nabla_x P_{s,t} f(Z_s) \sqrt{\alpha} ds\right] \\
  &= \left(\frac{1}{\sqrt{\alpha}} +
    O\left(\frac{1}{\alpha}\right) \right) \frac{\alpha}{1+\alpha} \int_0^t e^{-\lambda_0 s}
    P_{s,t}(\nabla_x f + \nabla_y f)(Z_s) ds.
\end{align*}
The result for $\int_0^t f(s, Z_s) ds$ follows by the same arguments
from the martingale representation for
$\int_0^t f(s, Z_s) ds - \E \int_0^t f(s, Z_s) ds$.

\section{Averaging: Conditioning on the slow component}

Consider the following linear multiscale SDE on $\R \times \R$
accelerated by a factor $\alpha$:
\begin{align*}
  dX_t &= -\alpha \kappa_X (X_t - Y_t) dt + \sqrt{\alpha} \sigma_X dB^X_t, 
         \quad X_0 = 0 \\
  dY_t &= -\kappa_Y (Y_t - X_t) dt + \sigma_Y dB^Y_t, \quad Y_0 = 0
\end{align*}
where $B^X, B^Y$ are independent Brownian motions and $\alpha, \kappa_X,
\kappa_Y, \sigma_X, \sigma_Y$ are strictly positive constants and we
are interested in the solution on a fixed inverval $[0, T]$.

We define the corresponding averaged process to be the solution to
\begin{subequations}
  \begin{align}
    d\bar{X}_t &= -\alpha \kappa_X(\bar{X}_t - \bar{Y}_t)dt + \sqrt\alpha
                 \sigma_X dB^X_t, \quad \bar{X}_0 = 0 \label{eq:Xbar} \\
    d\bar{Y}_t &= \E\left[-\kappa_Y (\bar{Y}_t - \bar{X}_t) \middle|
                 \mathcal{F}^{\bar{Y}}_t \right]dt +
                 \sigma_Y dB^Y_t, \quad \bar{Y}_0 = 0
  \end{align}
\end{subequations}
where $\mathcal{F}^{\bar{Y}}_t$ is the $\sigma$-algebra generated by
$(\bar{Y}_s)_{s\leq t}$. 

The conditional measure $\PP(\cdot | \mathcal{F}^{\bar{Y}}_T)$ has a
regular conditional probability density
$u \mapsto \PP(\cdot | \bar{Y} = u)$, $u \in C([0, T], \R)$. Now
observe that $B^X$ remains unchanged under $\PP(\cdot | \bar{Y} = u)$
since $\bar{Y}$ and $B^X$ are independent. This means that for all
$u \in C([0, T], \R)$ and $f \in C_c^\infty(\R)$,
$\PP(\cdot | \bar{Y}=u)$ solves the same martingale problem as the
measure associated to
\begin{equation}\label{eq:Xu}
  dX^u_t = -\alpha \kappa_X(X^u_t - u(t))dt + \sqrt\alpha
                 \sigma_X dB^X_t, \quad X^u_0 = 0.
\end{equation}
It follows that the conditional expectation given
$\mathcal{F}^{\bar{Y}}_T$ of any functional involving $\bar{X}$ equals
the usual expectation of the same functional with $\bar{X}$ replaced
by $X^u$ evaluated at $u = Y$.

For example, since
\begin{equation*}
  \E X^u_t = \int_0^t \alpha \kappa_X e^{-\alpha\kappa_X (t-s)} \, u(s) \, ds
\end{equation*}
the drift coefficient of $\bar{Y}$ is
\begin{multline*}
  \E\left[-\kappa_Y (\bar{Y}_t - \bar{X}_t) \middle|
                 \mathcal{F}^{\bar{Y}}_t \right] = -\kappa_Y
               (\bar{Y}_t - \E[\bar{X}_t | \mathcal{F}^{\bar{Y}}_T]) =
                -\kappa_Y
               (\bar{Y}_t - \E X^u_t|_{u = \bar{Y}}) \\ = -\kappa_Y
               \left(\bar{Y}_t - \int_0^t \alpha \kappa_X e^{-\alpha\kappa_X (t-s)} \, \bar{Y}_s \, ds\right)
\end{multline*}
so that $\bar{Y}$ solves the SDE
\begin{subequations}\label{eq:YbarZ}
  \begin{align}
    dZ_t &= -\alpha \kappa_X (Z_t - \bar{Y}_t) dt \\
    d\bar{Y}_t &= -\kappa_Y (\bar{Y}_t - Z_t) dt + \sigma_Y dB^Y_t.
  \end{align}
\end{subequations}

The key step in our estimate for $Y_t - \bar{Y}_t$ is
the application of the results from the first section to 
\begin{equation*}
  \int_0^T h(t)( X^u_t - \E X^u_t) dt
\end{equation*}
for a certain function $h(t)$.

We begin with a gradient estimate for the evolution operator
$P^u_{s,t}$ associated to $X^u$.
\begin{lemma} Let $\id(x) = x$ be the identity function and $h(t) \in
  C([0, T], \R)$. We have for all $x \in \R$
  \begin{equation*}
    \partial_x P^u_{s,t} (h \id) (x) = h(t) e^{-\alpha \kappa_X (t-s)}.
  \end{equation*}
\end{lemma}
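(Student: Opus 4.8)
The plan is to exploit the linearity of \eqref{eq:Xu}: the forcing $u$ enters the drift additively and the noise is additive, so differentiation in the initial state sees neither of them. First I would read $h\,\id$ as the space-time observable $(t,x)\mapsto h(t)\,x$, so that by the definition of the evolution operator
\[
  P^u_{s,t}(h\,\id)(x) = \E\!\left[h(t)\,X^u_t \,\middle|\, X^u_s = x\right] = h(t)\,\E\!\left[X^u_t \,\middle|\, X^u_s = x\right].
\]
The factor $h(t)$ is a deterministic constant once $t$ is fixed, so the claim reduces to showing that $\partial_x \E[X^u_t \mid X^u_s = x] = e^{-\alpha\kappa_X(t-s)}$.

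To get this I would reuse the derivative-flow argument from the preceding linear section. Fix $s$, let $X^{u,x}$ denote the solution of \eqref{eq:Xu} started at $X^{u,x}_s = x$, and set $V_t = X^{u,x+\varepsilon}_t - X^{u,x}_t$. Subtracting the two copies of the SDE, the additive noise $\sqrt\alpha\,\sigma_X\,dB^X_t$ and the forcing $\alpha\kappa_X u(t)$ cancel exactly, leaving the deterministic linear ODE
\[
  dV_t = -\alpha\kappa_X V_t\, dt, \qquad V_s = \varepsilon,
\]
whose solution is $V_t = \varepsilon\, e^{-\alpha\kappa_X(t-s)}$, independent of both $u$ and the Brownian path. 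Hence
\[
  \partial_x \E[X^u_t \mid X^u_s = x] = \lim_{\varepsilon\to 0}\frac{\E[V_t]}{\varepsilon} = e^{-\alpha\kappa_X(t-s)},
\]
and multiplying by $h(t)$ yields the stated identity.

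An equivalent route, if one prefers an explicit computation, is variation of constants: starting from $X^u_s = x$ one has
\[
  X^u_t = x\, e^{-\alpha\kappa_X(t-s)} + \int_s^t \alpha\kappa_X\, e^{-\alpha\kappa_X(t-r)} u(r)\, dr + \sqrt\alpha\,\sigma_X\int_s^t e^{-\alpha\kappa_X(t-r)}\, dB^X_r.
\]
Taking the conditional expectation kills the Itô integral, and differentiating in $x$ annihilates the $u$-dependent integral (which does not involve $x$), leaving only the derivative of the affine leading term, again $e^{-\alpha\kappa_X(t-s)}$.

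There is no real analytic obstacle: the result is a one-line consequence of linearity once one observes that neither the forcing nor the noise depends on the initial state. The only points deserving care are the correct interpretation of $h\,\id$ as a time-dependent observable, so that $h(t)$ factors cleanly out of $P^u_{s,t}$, and the interchange of $\partial_x$ with the expectation. The latter is immediate here since $V_t$ is deterministic (equivalently, the explicit solution is affine in $x$ with the $x$-coefficient being the deterministic exponential), so no dominated-convergence bookkeeping is needed.
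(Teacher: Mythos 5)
Your proposal is correct and follows essentially the same argument as the paper: couple two solutions of \eqref{eq:Xu} started at $x$ and $x+\varepsilon$ with the same noise, note that the difference satisfies the deterministic ODE $dV_t = -\alpha\kappa_X V_t\,dt$ so that $V_t = \varepsilon e^{-\alpha\kappa_X(t-s)}$, and pass to the difference quotient with the factor $h(t)$ pulled out. The variation-of-constants alternative you mention is a valid equivalent, but the main route is the paper's proof verbatim.
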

\begin{proof}
Denote $X^{s,x}_t$ the solution to~\eqref{eq:Xu} with
$X^u_s = x$.
Then
\begin{equation*}
  d(X^{s,x+\varepsilon}_t - X^{s,x}_t) = -\alpha
  \kappa_X(X^{s,x+\varepsilon}_t - X^{s,x}_t) dt
\end{equation*}
so that
\begin{equation*}
  X^{s,x+\varepsilon}_t - X^{s,x}_t = \varepsilon e^{-\kappa_X \alpha (t-s)}
\end{equation*}
and
\begin{equation*}
  \partial_x P_{s,t} (h \id) (x) = \lim_{\varepsilon \to 0}
  \varepsilon^{-1} \E\left[h(t) X^{s,x+\varepsilon}_t - h(t) X^{s,x}_t\right] = h(t) e^{-\kappa_X \alpha (t-s)}.
\end{equation*}
\end{proof}

\begin{theorem}\label{theorem:multiou}
  \begin{align}\label{eq:equal1}
    \E\abs{Y_T - \bar{Y}_T}^2 &=
    \frac{\alpha \kappa_Y^2 \sigma_X^2}{(\alpha \kappa_X + \kappa_Y)^2} \int_0^T
    \left(1 - e^{-\alpha \kappa_X (T-t)}\left(2 - e^{-\kappa_Y(T-t)}\right)
    \right)^2 dt \\ &\leq \frac{T}{\alpha} \frac{\kappa_Y^2
                      \sigma_X^2}{\kappa_X^2} \notag 
  \end{align}
  and
  \begin{align}\label{eq:equal2}
    \E\abs{\bar{Y}_T - \sigma_Y B^Y_T}^2 &= \frac{\kappa_Y^2 \sigma_Y^2}{(\alpha
    \kappa_X + \kappa_Y)^2} \int_0^T \left(1 - e^{-(\alpha\kappa_X +
    \kappa_Y)t}\right)^2 dt \\ & \leq
\frac{T}{\alpha^2} \frac{\kappa_Y^2 \sigma_Y^2}{\kappa_X^2}. \notag
  \end{align}
\end{theorem}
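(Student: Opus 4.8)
The plan is to recognise both left-hand sides as $L^2$-norms of centred Gaussian (Wiener) integrals and to evaluate them by Itô's isometry, with the integrands supplied by the martingale representation of Corollary~\ref{corollary:1} together with the gradient formula of the preceding Lemma. The structural fact I would record first is that the averaged slow variable $\bar{Y}$ in~\eqref{eq:YbarZ} is driven by $B^Y$ alone (its drift is $\mathcal{F}^{\bar{Y}}$-measurable), so that subtracting $dY_t = -\kappa_Y(Y_t - X_t)\,dt + \sigma_Y\,dB^Y_t$ from~\eqref{eq:YbarZ} cancels the common term $\sigma_Y\,dB^Y_t$. Writing $\delta_t := X_t - Z_t$ and $D_t := Y_t - \bar{Y}_t$, the pair then solves the closed linear system $d(\delta_t, D_t)^\top = -A\,(\delta_t, D_t)^\top\,dt + (\sqrt{\alpha}\,\sigma_X\,dB^X_t,\,0)^\top$ with $\delta_0 = D_0 = 0$ and the same drift matrix $A = \left(\begin{smallmatrix}\alpha\kappa_X & -\alpha\kappa_X \\ -\kappa_Y & \kappa_Y\end{smallmatrix}\right)$ as the original dynamics. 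In particular $Y_T - \bar{Y}_T$ is driven by the fast noise $B^X$ only, which is why~\eqref{eq:equal1} carries $\sigma_X^2$ and no $\sigma_Y^2$.

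To connect this with the key step, note that $X_t = X^u_t|_{u=Y}$ while $Z_t = \E X^u_t|_{u=\bar{Y}}$, so $\delta_t = (X^{\bar{Y}}_t - \E X^{\bar{Y}}_t) + (\E X^Y_t - \E X^{\bar{Y}}_t)$; feeding this into $D_T = \kappa_Y\int_0^T e^{-\kappa_Y(T-t)}\delta_t\,dt$ gives $D_T = \int_0^T h(t)\,(X^{\bar{Y}}_t - \E X^{\bar{Y}}_t)\,dt + \int_0^T h(t)\,(\E X^Y_t - \E X^{\bar{Y}}_t)\,dt$ with $h(t) = \kappa_Y e^{-\kappa_Y(T-t)}$. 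The first integral is exactly the object of the key step: Corollary~\ref{corollary:1} and the Lemma (which gives $\partial_x\int_t^T P^u_{t,s}(h\,\id)(x)\,ds = \int_t^T h(s)e^{-\alpha\kappa_X(s-t)}\,ds$) rewrite it as $\sqrt{\alpha}\,\sigma_X\int_0^T\bigl(\int_t^T h(s)e^{-\alpha\kappa_X(s-t)}\,ds\bigr)\,dB^X_t$. Using $\E X^u_t = \int_0^t \alpha\kappa_X e^{-\alpha\kappa_X(t-s)}u(s)\,ds$, the second integral equals $\int_0^T h(t)\int_0^t \alpha\kappa_X e^{-\alpha\kappa_X(t-s)}D_s\,ds\,dt$, a linear functional of $D$ itself. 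This feedback, coming from the fact that the fast conditional mean is evaluated along the differing slow paths $Y$ and $\bar{Y}$, is the one genuinely delicate point, and I expect it to be the main obstacle. The clean way around it is the matrix form above: since $\det A = 0$ and $\Tr A = \alpha\kappa_X + \kappa_Y =: \mu$, Cayley--Hamilton gives $A^2 = \mu A$ and hence $e^{-A\tau} = \Id - \mu^{-1}A\,(1 - e^{-\mu\tau})$; applying Itô's isometry to $D_T = \sqrt{\alpha}\,\sigma_X\int_0^T [e^{-A(T-t)}]_{21}\,dB^X_t$ then yields~\eqref{eq:equal1}.

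For~\eqref{eq:equal2} there is no feedback to resolve, since~\eqref{eq:YbarZ} is a genuine two-dimensional Ornstein--Uhlenbeck system driven by $B^Y$. Here $\bar{Y}_T - \sigma_Y B^Y_T = -\kappa_Y\int_0^T(\bar{Y}_t - Z_t)\,dt = \int_0^T\bigl([e^{-A(T-t)}]_{22} - 1\bigr)\sigma_Y\,dB^Y_t$, and from $e^{-A\tau} = \Id - \mu^{-1}A(1 - e^{-\mu\tau})$ one reads off $[e^{-A(T-t)}]_{22} - 1 = -\tfrac{\kappa_Y}{\mu}(1 - e^{-\mu(T-t)})$, so Itô's isometry gives~\eqref{eq:equal2} after the substitution $t \mapsto T-t$ in the time integral. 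Finally, both bounds are elementary consequences of the exact identities: each integrand is at most $1$, so each time integral is at most $T$, while $\mu \geq \alpha\kappa_X$ turns the prefactors $\tfrac{\alpha\kappa_Y^2\sigma_X^2}{\mu^2}$ and $\tfrac{\kappa_Y^2\sigma_Y^2}{\mu^2}$ into $\tfrac{1}{\alpha}\tfrac{\kappa_Y^2\sigma_X^2}{\kappa_X^2}$ and $\tfrac{1}{\alpha^2}\tfrac{\kappa_Y^2\sigma_Y^2}{\kappa_X^2}$ respectively.
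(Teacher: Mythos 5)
Your argument is mathematically correct, and for \eqref{eq:equal1} it takes a genuinely different route from the paper. The paper resolves the feedback through a scalar convolution equation: it writes $Y_T-\bar{Y}_T=\kappa_Y\int_0^T h(T-s)\,(\bar{X}_s-\E[\bar{X}_s\mid\bar{Y}])\,ds$, determines $h(t)=\frac{\alpha\kappa_X}{\mu}+\frac{\kappa_Y}{\mu}e^{-\mu t}$ (with $\mu:=\alpha\kappa_X+\kappa_Y$) by inverting a Laplace transform, then conditions on $\bar{Y}$ and applies Corollary~\ref{corollary:1} together with the gradient Lemma to $\int_0^T h(T-s)(X^u_s-\E X^u_s)\,ds$, finishing with It\^o's isometry. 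You instead close the system in the pair $(\delta_t,D_t)$, so the feedback is absorbed into the matrix $A$ from the outset, and Cayley--Hamilton ($\det A=0$, hence $e^{-A\tau}=\Id-\mu^{-1}A(1-e^{-\mu\tau})$) replaces the Laplace inversion; with this you need neither the conditioning argument nor Corollary~\ref{corollary:1} and the Lemma at all, and you obtain $Y_T-\bar{Y}_T$ as a Wiener integral \emph{pathwise}, not merely in law. The same comparison applies to \eqref{eq:equal2}, where the paper computes a double time integral of the Ornstein--Uhlenbeck covariance (the $\sinh$ computation) while you read the answer off $[e^{-A\tau}]_{22}$ and It\^o's isometry. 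What your route buys is brevity and an exact pathwise identity; what the paper's route buys is a template (conditioning plus the semigroup-gradient representation of time averages of the conditioned process) that does not depend on the full system being linear, which is the method the subsequent section is built on.

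One discrepancy you should have flagged: your computation yields the integrand $\bigl(1-e^{-(\alpha\kappa_X+\kappa_Y)(T-t)}\bigr)^2$ in \eqref{eq:equal1}, whereas the printed display has $\bigl(1-e^{-\alpha\kappa_X(T-t)}(2-e^{-\kappa_Y(T-t)})\bigr)^2=\bigl(1-2e^{-\alpha\kappa_X(T-t)}+e^{-(\alpha\kappa_X+\kappa_Y)(T-t)}\bigr)^2$, and these are not equal. This is not an error on your side: the paper's own proof also ends with $\frac{\alpha\kappa_Y^2\sigma_X^2}{(\alpha\kappa_X+\kappa_Y)^2}\int_0^T\bigl(1-e^{-(\alpha\kappa_X+\kappa_Y)(T-t)}\bigr)^2dt$, so the statement of \eqref{eq:equal1} is inconsistent with the paper's proof, and your derivation (which I checked independently via the Lyapunov equation for the covariance of $(\delta,D)$) confirms that the proofs are right and the printed display is the one in error --- it is exactly what a sign slip in the partial fractions, $h(t)=\frac{\alpha\kappa_X}{\mu}-\frac{\kappa_Y}{\mu}e^{-\mu t}$, would produce, and that $h$ violates the necessary normalisation $h(0)=1$. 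The upper bound $\leq\frac{T}{\alpha}\frac{\kappa_Y^2\sigma_X^2}{\kappa_X^2}$ is unaffected, since both integrands are bounded by $1$. Your closing claim that the matrix computation ``yields~\eqref{eq:equal1}'' should therefore be stated as yielding the corrected form of~\eqref{eq:equal1}.
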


\begin{proof}[Proof of Theorem~\ref{theorem:multiou}]
We now proceed to show the equality~\eqref{eq:equal1}. We decompose
\begin{align}\label{eq:YminusYbar}
  Y_T - \bar{Y}_T 
  &= \int_0^T \kappa_Y (X_t - Y_t) dt - \int_0^T \kappa_Y
                       (\E[\bar{X}_t | \bar{Y}] - \bar{Y}_t)dt \notag
  \\ 
  & = -\kappa_Y \int_0^T (\E[\bar{X}_t | \bar{Y}] - \bar{X}_t) dt 
    - \kappa_Y \int_0^T (Y_t - \bar{Y}_t) - (X_t -
  \bar{X}_t) dt.
\end{align}
Using linearity, we now proceed to rewrite this as
\begin{equation*}
  Y_T - \bar{Y}_T = -\kappa_Y \int_0^T h(T-t) (\E[\bar{X}_t | \bar{Y}] - \bar{X}_t) dt 
\end{equation*}
for some function $h$.

Since
\begin{equation*}
  d(X_t - \bar{X}_t) = -\alpha \kappa_X (X_t - \bar{X}_t) dt +
  \alpha\kappa_X (Y_t - \bar{Y}_t) dt
\end{equation*}
we have
\begin{gather*}
  X_t - \bar{X}_t = \int_0^t \alpha \kappa_X e^{-\alpha \kappa_X
    (t-s)} (Y_t - \bar{Y}_t) ds.
\end{gather*}
With the notation
\begin{equation*}
  f(t) = Y_t -
  \bar{Y}_t, \quad
  g(t) = \bar{X}_t - \E[\bar{X}_t | \bar{Y}]
\end{equation*}
equation~\eqref{eq:YminusYbar} reads as
\begin{equation*}
  \frac1{\kappa_Y} f'(t) + f(t) - \int_0^t \alpha \kappa_X e^{-\alpha \kappa_X
    (t-s)} f(s) ds = g(t).
\end{equation*}

Using capital letters for the Laplace transform, this writes as
\begin{equation*}
  \frac{s}{\kappa_Y} F(s) + F(s) - \frac{\alpha \kappa_X}{s + \alpha \kappa_X} F(s)= G(s)
\end{equation*}
or, after rearranging,
\begin{equation*}
  F(s) = \kappa_Y \frac{s+\alpha\kappa_X}{s(s + \alpha\kappa_X +
    \kappa_Y)} G(s) = \kappa_Y H(s)G(s).
\end{equation*}
Inverting the Laplace transform, we find that
\begin{gather*}
  h(t) = \frac{\alpha \kappa_X}{\alpha \kappa_X + \kappa_Y} +
  \frac{\kappa_Y}{\alpha \kappa_X + \kappa_Y} e^{-(\alpha\kappa_X + \kappa_Y)t}
\end{gather*}
so that
\begin{align*}
  Y_T - \bar{Y}_T &= \kappa_Y \int_0^T h(T-s)
  \left(\bar{X}_s - \E[\bar{X}_s | \bar{Y}] \right) ds.
\end{align*}

By the properties of conditional expectation and
Corollary~\ref{corollary:1} we have for any integrable function $\Phi$ that
\begin{equation*}
  \E \Phi(Y_T - \bar{Y}_T) = \E[ \E \Phi(Y_T - \bar{Y}_T) |
  \mathcal{F}^{\bar{Y}}_T] = \E[ \E(\Phi(Y_T - \bar{Y}_T) | u =
  \bar{Y}) ]
  = \E [(\E \Phi(M^u_T))|_{u = \bar{Y}}]
\end{equation*}
with
\begin{align*}
  M^u_T 
  &= \kappa_Y \int_0^T \int_t^T \partial_x P^u_{t,s} (h(T-\cdot)
  \id) (X_t) \, ds \, \sqrt{\alpha} \sigma_X dB_t \\
  &= \kappa_Y
    \sqrt{\alpha} \sigma_X \int_0^T \int_t^T h(T-s)
    e^{-\alpha \kappa_X (s-t)} \, ds \, dB_t \\
  &= \frac{ \kappa_Y \sqrt{\alpha} \sigma_X}{\alpha \kappa_X +
    \kappa_Y} \int_0^T \int_t^T \alpha \kappa_X e^{-\alpha \kappa_X (s -
    t)} ds + \int_t^T \kappa_Y e^{-\kappa_Y (T-s)} e^{-\alpha \kappa_X
    (T-s)} e^{-\alpha \kappa_X (s-t)} \, ds \, dB_t \\
  &= \frac{\sqrt{\alpha} \kappa_Y \sigma_X}{\alpha \kappa_X +
    \kappa_Y} \int_0^T 1 -
    e^{-(\alpha \kappa_X + \kappa_Y)(T-t)} \, dB_t.
\end{align*}
Since $M^u_t$ is independent of $u$ we can let $M_t =
M^u_t$ for an arbitrary $u$ so that
\begin{equation*}
  \E \Phi(Y_T - \bar{Y}_T) = \E \Phi(M_T).
\end{equation*}

Now we can compute
\begin{align*}
  \E \Abs{Y_T - \bar{Y}_T}^2 = \E \QV{M}_T = \frac{\alpha \kappa_Y^2 \sigma_X^2}{(\alpha \kappa_X +
    \kappa_Y)^2} \int_0^T \left(1 - 
    e^{-(\alpha \kappa + \kappa_Y)(T-t)} \right)^2 dt.
\end{align*}

We now turn to the computation of $\E\abs{\bar{Y}_t - \sigma_Y B^Y_t}^2$.

From equation~\eqref{eq:YbarZ} we have
\begin{equation*}
  d (\bar{Y}_t - Z_t) = -(\alpha \kappa_X + \kappa_Y) (\bar{Y}_t -
  Z_t) dt + \sigma_Y B^Y_t
\end{equation*}
so that
\begin{equation}\label{eq:3}
  \bar{Y}_t - Z_t = \sigma_Y \int_0^t e^{-(\alpha \kappa_X +
    \kappa_Y)(t-s)} dB^Y_s.
\end{equation}
is an Ornstein-Uhlenbeck process. This means that
\begin{equation*}
  \E (\bar{Y}_t - Z_t) (\bar{Y}_s - Z_s) = \frac{\sigma_Y^2 e^{-(\alpha\kappa_X + \kappa_Y)t}}{\alpha
    \kappa_X + \kappa_Y} \sinh((\alpha \kappa_X +
  \kappa_Y)s), \quad s \leq t.
\end{equation*}
so that
\begin{align*}
  \E\abs{\bar{Y}_t - \sigma_Y B^Y_t}^2 
  &= \kappa_Y^2 \Abs{\int_0^t
  \bar{Y}_s - Z_s ds}^2  \\
  &= 2 \kappa_Y^2 \int_0^t \int_0^s \E (\bar{Y}_s
    - Z_s) (\bar{Y}_r - Z_r) dr ds \\
  &= \frac{2 \kappa_Y^2 \sigma_Y^2}{(\alpha
    \kappa_X + \kappa_Y)} \int_0^t e^{-(\alpha\kappa_X + \kappa_Y)s} \int_0^s 
    \sinh((\alpha \kappa_X + \kappa_Y) r) dr ds \\
  &= \frac{2 \kappa_Y^2 \sigma_Y^2}{(\alpha
    \kappa_X + \kappa_Y)^2} \int_0^t e^{-(\alpha\kappa_X + \kappa_Y)s}
    \left(\cosh((\alpha \kappa_X + \kappa_Y) s) - 1 \right) ds \\
  &= \frac{\kappa_Y^2 \sigma_Y^2}{(\alpha
    \kappa_X + \kappa_Y)^2} \left(\int_0^t 1 + e^{-2 (\alpha\kappa_X +
    \kappa_Y)s} - 2 e^{-(\alpha\kappa_X + \kappa_Y)s} ds \right)
\end{align*}
\end{proof}

\section{Approximation by Averaged Measures}
In the previous section, the computation for
$\E\abs{\bar{Y}_t - \sigma_Y B^Y_t}^2$ relied on the fact that we had
an explicit expression for $\E[\bar{X}_t - \bar{Y}_t | Y]$. Here we will see a
method that can be used to obtain similar estimates in more general
situations.

Consider a diffusion process $(X_t, Y_t)$ on $\R^n \times \R^m$
\begin{align*}
  dX_t &= b_X(X_t, Y_t) dt + \sigma_X(X_t, Y_t) dB^X_t \\
  dY_t &= b_Y(Y_t) dt + \sigma_Y(Y_t) dB^Y_t
\end{align*}
where $B^X$ and $B^Y$ are standard independent Brownian
motions. Denote $L$ the generator of $(X, Y)$ and $\mathcal{F}^Y$ the filtration of $B^Y$.

\newcommand{\EFy}[1]{\E^{\mathcal{F}^Y_#1}}

Let
\begin{equation*}
    Q_t f = \EFy t f(X_t, Y_t)
\end{equation*}
so that, by the Itô formula and since $Y$ is adapted to
$\mathcal{F}^Y$ and $B^X$ and $B^Y$ are independent, we have
\begin{align*}
  Q_t f
  &= \EFy{t} \Biggl[f(X_0, Y_0) + \int_0^t L f(X_s, Y_s) ds +
    \int_0^t \nabla_x f(X_s, Y_s) \cdot \sigma_X(X_s, Y_s) dB^X_s \\ &\quad +
    \int_0^t \nabla_y f(X_s, Y_s) \cdot \sigma_Y(Y_s) dB^Y_s \Biggr]
  \\
  &= \EFy{0}[f(X_0, Y_0)] + \int_0^t \EFy s Lf(X_s, Y_s) ds + \int_0^t
    (\EFy s \nabla_y f(X_s, Y_s)) \cdot \sigma_Y(Y_s) dB^Y_s.
\end{align*}

In other words,
\begin{align*}
  dQ_t f &= Q_t L f dt + (Q_t \nabla_y f) \cdot
           \sigma_Y(Y_t) dB^Y_t.
\end{align*}

\begin{example}[Averaged Ornstein-Uhlenbeck]
Consider again the process $(\bar{X}, \bar{Y})$ from the previous
section. In this case, $f(x, y) = x - y$ is an eigenfunction of $-L$
with eigenvalue $\alpha \kappa_X + \kappa_Y$ and we have $\partial_y f
= -1$. Therefore
\begin{equation*}
  dQ_t f = -(\alpha \kappa_X + \kappa_Y) Q_t f dt - \sigma_Y dB^Y_t
\end{equation*}
so that we retrieve the result from~\eqref{eq:3}
\begin{equation*}
   \E[\bar{X}_t - \bar{Y}_t | \bar{Y}] = Q_t f = -\sigma_Y \int_0^t e^{-(\alpha
     \kappa_X + \kappa_Y)(t-s)} dB^Y_s.
\end{equation*}

\end{example}

\printbibliography

\end{document}